\documentclass[a4paper,12pt]{article}
\usepackage[margin=1in]{geometry}  

\usepackage{graphicx}              
\usepackage{amsmath}
\usepackage{amscd}               
\usepackage{amsfonts} 
\usepackage{amssymb}             
\usepackage{amsthm}                

\newtheorem{thm}{Theorem}[section]
\newtheorem{defn}[thm]{Definition}
\newtheorem{lem}[thm]{Lemma}
\newtheorem{prop}[thm]{Proposition}
\newtheorem{cor}[thm]{Corollary}

\newtheorem{rmk}[thm]{Remark}

\newcommand{\RR}{\mathbb{R}}      
\newcommand{\NN}{\mathbb{N}}

\newcommand{\oO}{\mathcal{O}}

\newcommand{\eE}{\mathcal{E}}
\newcommand{\kK}{\mathcal{K}}
\newcommand{\sS}{\mathcal{S}}
\newcommand{\wW}{\mathcal{W}}
\newcommand{\pP}{\mathcal{P}}
\newcommand{\xX}{\mathcal{X}}

\newcommand{\EE}{\mathbb{E}}     



\newcommand{\lfl}{\left\lfloor }  
\newcommand{\rfl}{\right\rfloor} 

\begin{document}

\title{Concentration and exact convergence rates for expected Brownian signatures}

\author{\textsc{Hao Ni and Weijun Xu}\\
\textit{University of Oxford}}

\maketitle

\abstract{The signature of a $d$-dimensional Brownian motion is a sequence of iterated Stratonovich integrals along the Brownian paths, an object taking values in the tensor algebra over $\RR^{d}$. In this note, we derive the exact rate of convergence for the expected signatures of piecewise linear approximations to Brownian motion. The computation is based on the identification of the set of words whose coefficients are of the leading order, and the convergence is concentrated on this subset of words. Moreover, under the choice of projective tensor norm, we give the explicit value of the leading term constant. }

\bigskip

\section{Introduction}

Let $(e_{1}, \cdots, e_{d})$ be the standard basis of $\RR^{d}$, $d \geq 2$, and let
\begin{align*}
B_{t} = \sum_{j=1}^{d} B_{t}^{j}e_{j}, 
\end{align*}
where $B_{t}^{j}$'s are independent standard one dimensional Brownian motions. The signature of $B$ is a sequence of Stratonovich iterated integrals along the sample paths (\cite{Hambly and Lyons}, \cite{LeJan and Qian}). We give a formal definition below.

\bigskip

\begin{defn} \label{definition of signatures}
For every $n \geq 1$ and every word $w = e_{i_{1}} \cdots e_{i_{n}}$ with length $n$, define
\begin{align} \label{Stratonovich signatures}
C^{w}_{s,t} = \int_{s < u_{1} < \cdots < u_{n} < t} \circ dB_{u_{1}}^{i_{1}} \cdots \circ dB_{u_{n}}^{i_{n}}
\end{align}
in the sense of Stratonovich integral. For each $n \geq 0$, let $X_{s,t}^{n}(B) = \sum_{|w|=n}C^{w}_{s,t}$, where the sum is taken over all words of length $n$. We use the convention $C^{w}_{s,t} \equiv 1$ if $w$ is the empty word. Then, the series
\begin{align*}
X_{s,t}(B) = \sum_{n=0}^{+\infty}X_{s,t}^{n}(B)
\end{align*}
is the (Stratonovich) signature of B over time interval $[s,t]$. 
\end{defn}

\bigskip

\begin{rmk}
It is sometimes more convenient to write the signatures in terms of tensors, i.e., 
\begin{align*}
X_{s,t}^{n} = \int_{s < u_{1} < \cdots u_{n} < t} \circ dB_{u_{1}} \otimes \cdots \otimes \circ dB_{u_{n}}, 
\end{align*}
and $C^{w}_{s,t}$ defined in \eqref{Stratonovich signatures} is the coefficient of $w$ in $X$. This is equivalent to Definition \ref{definition of signatures}. 
\end{rmk}

\bigskip

The study of the signature of a path dates back to K.T.-Chen in 1950's. In a series of papers (\cite{Chen 1957}, \cite{Chen 1958}, \cite{Chen 1977}), he developed algebraic properties of these multiple iterated integrals, and showed that piecewise smooth paths are characterized by their iterated path integrals over a fixed time interval. Hambly and Lyons (\cite{Hambly and Lyons}) gave a quantitative version of this result, and extended it to all paths of bounded variation. They showed that, paths of bounded variation in $\RR^{d}$ are uniquely determined by their signatures up to tree-like equivalence. 

Lyons (\cite{Lyons 1998}) studied the signatures of paths that are not necessarily piecewise smooth. He realized that the key properties in defining an integration theory along non-regular paths is this sequence of iterated integrals rather than the path itself alone. This idea led to the development of rough path theory. 

As for random paths, the expected signature is an important object to study as it determines the law of compactly supported measure on path space, and this is anticipated to be true for more general stochastic processes, the foremost example being Brownian motion. The computation of the expected signature of Brownian motion also leads to cubature on Wiener space (\cite{Lyons and Victoir}). 

The expected signature for Brownian motion was first derived by Fawcett (\cite{Fawcett}), and then independently by Lyons and Victoir (\cite{Lyons and Victoir}). In this note, we show that the expected signature of piecewise linear approximation to Brownian motion with mesh size $\frac{1}{M}$ converges to that of Brownian motion with rate $\frac{1}{M}$. This rate can be used to estimate the efficiency in some cubature algorithms. Moreover, under the choice of projective tensor norm, we give the explicit value of the leading term constant. This is an example where the projective tensor norm is more useful than the usual Hilbert Schmidt norm (see Theorem 9 in \cite{Hambly and Lyons} for another example). 

More precisely, let $B^{(M)}$ denote the piecewise linear approximation to Brownian motion with mesh size $\frac{1}{M}$. Let
\begin{align*}
\phi(T) = \EE X_{0,T}(B), \qquad \phi^{M}(T) = \EE X_{0,T}(B^{(M)}), 
\end{align*}
then our main theorem is the following.

\bigskip

\begin{thm} \label{main theorem}
For each $n \geq 0$, let $\pi_{n}$ denote the projection from the tensor algebra to $(\RR^{d})^{\otimes n}$. Then, 

\begin{flushleft}
(i) $\pi_{2}(\phi(T)) = \pi_{2}(\phi^{M}(T))$, and $\pi_{2n-1}(\phi(T)) = \pi_{2n-1}(\phi^{M}(T)) = 0$ for all $n \geq 1$. 
\end{flushleft}

\begin{flushleft}
(ii) For each $n \geq 2$, if $\RR^{d}$ is endowed with the $l_{1}$ norm, and $(\RR^{d})^{\otimes 2n}$ is given the projective tensor norm (to be defined in the next section), then
\end{flushleft}
\begin{align} \label{exact rate}
\lim_{M \rightarrow +\infty} \frac{M}{T} \left\| \pi_{2n}(\phi(T)) - \pi_{2n}(\phi^{M}(T)) \right\| = \frac{d-1}{3 \cdot (n-2)!}  \bigg(\frac{dT}{2} \bigg)^{n-1}. 
\end{align}
\end{thm}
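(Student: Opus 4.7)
The plan is to compute both expected signatures level by level. Part (i) follows from symmetry: the laws of $B$ and $B^{(M)}$ are invariant under $B \leftrightarrow -B$, which sends $\pi_k X_{0,T}$ to $(-1)^k \pi_k X_{0,T}$ and kills all odd levels in expectation. At level $2$, the shuffle identity $\pi_2 X_{0,T} + (\pi_2 X_{0,T})^{\top} = B_T \otimes B_T$ gives a symmetric part with mean $\tfrac{T}{2}\sum_i e_i\otimes e_i$ (shared by $B$ and $B^{(M)}$) and an antisymmetric Lévy-area part with mean zero in both by the same reflection.

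For part (ii), apply Chen's identity across the partition $\{k/M\}$. On each subinterval $B^{(M)}$ is a straight line with Gaussian increment $\Delta_j \sim N(0,(T/M)I_d)$, so its degree-$k$ iterated integral equals $\Delta_j^{\otimes k}/k!$. Chen, independence, and the vanishing of odd Gaussian moments give
\begin{equation*}
\pi_{2n}\bigl(\phi^M(T)\bigr) \,=\, \sum_{\substack{k_1+\cdots+k_M = 2n\\ k_j \in 2\ZZ_{\geq 0}}} \frac{1}{\prod_j k_j!}\, \bigotimes_{j=1}^M \EE\bigl[\Delta_j^{\otimes k_j}\bigr].
\end{equation*}
Writing $m := \#\{j : k_j>0\}$, the constraint $k_j \geq 2$ on nonzero entries forces $m \leq n$, and the $m$-th slice scales as $\binom{M}{m}(T/M)^n \asymp M^{m-n}$. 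The leading $O(1)$ piece ($m=n$, all $k_j=2$) equals $\binom{M}{n}(T/M)^n v^{\otimes n}/2^n$ with $v := \sum_i e_i \otimes e_i$, matching Fawcett's formula $\pi_{2n}(\phi(T)) = T^n v^{\otimes n}/(2^n n!)$ up to an $O(1/M)$ binomial error. The first correction ($m=n-1$, one $k_j=4$) uses Wick's theorem $\EE[\Delta^{\otimes 4}] = (T/M)^2(u_1+u_2+u_3)$, where $u_1 = v\otimes v$ is the non-crossing matching and $u_2 := \sum_{i,j} e_i\otimes e_j \otimes e_i \otimes e_j$, $u_3 := \sum_{i,j} e_i\otimes e_j \otimes e_j \otimes e_i$ are the two crossings. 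The $u_1$ part combines with the binomial error via $n(n-1)/n! = 1/(n-2)!$, yielding after simplification
\begin{equation*}
\pi_{2n}\bigl(\phi(T)-\phi^M(T)\bigr) \,=\, \frac{T^n}{3M\cdot 2^{n+1}(n-2)!}\,E_n \,+\, O(M^{-2}),
\end{equation*}
where $E_n := 2v^{\otimes n} - \tfrac{1}{n-1}\sum_{r=1}^{n-1} v^{\otimes(r-1)}\otimes(u_2+u_3)\otimes v^{\otimes(n-1-r)}$.

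Since $\RR^d$ carries the $l^1$ norm, the projective norm on $(\RR^d)^{\otimes 2n}$ equals the $l^1$ norm on word coefficients; the main obstacle will be evaluating $\|E_n\|_{l^1}$, since $v^{\otimes n}$ and the shifted $u_2, u_3$ blocks can share words with opposite signs when crossing letters coincide. The key combinatorial lemma I anticipate is that every word $w$ of length $2n$ with nonzero coefficient in $E_n$ belongs to exactly one of two families: (A) words with $w_{2k-1}=w_{2k}$ for all $k=1,\ldots,n$, whose coefficient equals $\tfrac{2(n-1-s(w))}{n-1}\geq 0$ with $s(w) := \#\{r: w_{2r}=w_{2r+1}\}$; or (B) words with exactly one shifted block of the form $(b,c,b,c)$ or $(b,c,c,b)$ with $b\neq c$, and all other consecutive pairs equal, carrying coefficient $-\tfrac{1}{n-1}$. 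Summing family A (via $\sum_{a\in\{1,\ldots,d\}^n} s(a) = (n-1)d^{n-1}$) and family B independently each yields $2d^{n-1}(d-1)$, so $\|E_n\|_{l^1} = 4d^{n-1}(d-1)$. Substitution produces the stated limit $\tfrac{d-1}{3(n-2)!}(dT/2)^{n-1}$, and the $(d-1)$ factor is consistent with the trivial $d=1$ case where $\phi = \phi^M$ exactly. Finally, a routine scaling check confirms that all $m \leq n-2$ contributions are $O(M^{-2})$ in projective norm, hence harmless in the limit.
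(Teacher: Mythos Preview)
Your argument is correct and arrives at the same constant, but the route differs from the paper's in an instructive way. The paper never writes down the first-order tensor $E_n$ explicitly; instead it proves the projective-norm identity $\left\|\pi_{2n}\phi(T)\right\|=\left\|\pi_{2n}\phi^{M}(T)\right\|$ (Lemma~\ref{full ranks in norm}), which immediately yields $\left\|\pi_{2n}(\phi-\phi^{M})\right\|=2\sum_{w\in\kK_{2n}\setminus\sS_{2n}}C^{w}(\phi^{M})$ as a sum of \emph{nonnegative} terms (Proposition~\ref{expression from the symmetry}). This eliminates all sign cancellations up front, so one only needs to compute the positive coefficients on the set $\wW_{2n}$ --- precisely your family~B --- and double the result. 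You instead expand $\phi-\phi^{M}$ to first order, obtain $E_n$, and then confront the cancellations head-on via your families A and B; your combinatorial identity $\sum_{a\in\{1,\dots,d\}^n} s(a)=(n-1)d^{n-1}$ shows that families A and B contribute equally, which is exactly the content of the paper's norm-equality trick seen from the other side. The paper's approach is slicker here because positivity comes for free, while yours is more portable: it produces the explicit leading tensor and would adapt (with different combinatorics for $\|E_n\|$) to norms other than the projective one.
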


\bigskip

The first part of the theorem is an immediate consequence of the basic properties of $\phi(T)$ and $\phi^{M}(T)$, which we will establish in section 3 below. The proof of the second claim is more involved. The core part of the proof is to identify for each $n$ the words whose coefficients are of order $\frac{1}{M}$, which turns out to be a rather small subset of words of length $2n$. The coefficients of all other words are of order $\oO(\frac{1}{M^{2}})$. That is to say, $\left\| \pi_{2n}(\phi(T)) - \pi_{2n}(\phi^{M}(T)) \right\|$ is concentrated on this small subset. We will give precise meaning in section 4 below. 

It should be noted that the exact value of the right hand side of \eqref{exact rate} depends on the choice of tensor norm and the equal space piecewise linear approximation. However, the concentration decribed above is due to the intrinsic nature of Brownian signatures, and remains unchanged under different tensor norms. 

Our paper is organized as follows. In section 2, we introduce the notion of tensors and the projective tensor norm. In section 3, we give some formulae and basic properties of the expected signatures of Brownian motion and its piecewise linear approximations. Section 4 is devoted to the proof of the main theorem.

\bigskip

\textsc{Acknowledgement.} We wish to thank our supervisor Terry Lyons for his support and helpful discussions.

\bigskip

\section{The projective tensor norm}

For each $n \geq 1$, the $n$-tensor space $(\RR^{d})^{\otimes n}$ is a real vector space with basis
\begin{align*}
\{e_{i_{1}} \cdots e_{i_{n}}: 1 \leq i_{1}, \cdots, i_{n} \leq d\}. 
\end{align*}
The tensor algebra over $\RR^{d}$ is defined by the direct sum
\begin{align*}
T(\RR^{d}):= \RR \oplus \RR^{d} \oplus \cdots \oplus (\RR^{d})^{\otimes n} \oplus \cdots. 
\end{align*}
Although it is common to identify $(\RR^{d})^{\otimes n}$ with $\RR^{d^{n}}$, which gives the Hilbert Schmidt norm, in many cases, the projective norm is more significant and useful. We give the definition below.

\bigskip

\begin{defn}
The projective tensor norm on $(\RR^{d})^{\otimes n}$ is defined by
\begin{align*}
\left\| v  \right\|:= \inf \bigg\{ \sum_{i} \left\| v_{1,i} \right\| \cdots \left\| v_{n,i} \right\|:  v = \sum_{i} v_{1,i} \otimes \cdots \otimes v_{n,i} \bigg\}. 
\end{align*}
\end{defn}

One should note that the projective tensor norm may vary according to different norms on $\RR^{d}$. In this paper, we choose $l_{1}$ norm on $\RR^{d}$. It is easy to deduce from the definition that if $x \in (\RR^{d})^{\otimes n}$ can be expressed as $x = \sum_{|w|=n} C^{w}w$, then
\begin{align*}
\left\| x \right\| = \sum_{|w|=n} |C^{w}|. 
\end{align*}

\bigskip

\textsc{Notations.} In the rest of the paper, $\left\|  \cdot \right\|_{n}$ will denote the projective tensor norm on $(\RR^{d})^{\otimes n}$. We will omit the subscript $n$ and simply write $\left\|  \cdot \right\|$ if no confusion may arise. We use $\pi_{n}$ to denote the projection from $T(\RR^{d})$ onto $(\RR^{d})^{\otimes n}$. Also, if $x \in T(\RR^{d})$, and $w$ is a word, then $C^{w}(x)$ will denote the coefficient of $w$ in $x$. Finally, for fixed $T$ and $M$, we write $\Delta t = \frac{T}{M}$.

\bigskip

\section{The expected signatures of Brownian motion and its piecewise linear approximations}

In this part, we give some formulae and propositions of $\phi(T)$ and $\phi^{M}(T)$. We first introduce some notations. For any word $w$, let $N_{i}(w)$ denote the number of occurences of the letter $e_{i}$ in $w$. For each $n \geq 0$, let
\begin{align*}
\sS_{2n} = \{w: w = e_{i_{1}}^{2} \cdots e_{i_{n}}^{2}, 1 \leq i_{1}, \cdots, i_{n} \leq d\}, 
\end{align*}
and
\begin{align*}
\kK_{2n} = \{w: |w| = 2n, \phantom{1} N_{i}(w) \text{is even for all $i$}\}. 
\end{align*}
The following formula for $\phi(T)$ was proven by Fawcett in \cite{Fawcett}.

\bigskip

\begin{prop} \label{expected signature for Brownian motion}
Let $B$ be a $d$-dimensional Brownian motion. Then, 
\begin{align*}
\phi(T) = \EE [X_{0,T}(B)] = \exp \bigg[ \frac{T}{2} \sum_{j=1}^{d} e_{j} \otimes e_{j} \bigg]. 
\end{align*}
\end{prop}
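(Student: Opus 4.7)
My plan is to derive the formula from the Stratonovich differential equation satisfied by the signature itself. First, I would observe that, as a process with values in the tensor algebra $T(\RR^d)$, the signature $X_{0,t}(B)$ solves
\begin{align*}
dX_{0,t} = X_{0,t} \otimes \circ dB_t, \qquad X_{0,0} = 1,
\end{align*}
where $\otimes$ denotes the multiplication in $T(\RR^d)$. This is immediate from Definition \ref{definition of signatures}: peeling off the last integration in the iterated Stratonovich integral defining $X^n_{0,t}(B)$ recovers $X^{n-1}_{0,t}(B) \otimes \circ dB_t$ on each level $n \geq 1$, and the levels assemble into the above equation in $T(\RR^d)$.

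Next, I would convert to It\^o form. Since $d[B^i, B^j]_t = \delta_{ij}\, dt$, the Stratonovich-to-It\^o correction gives
\begin{align*}
dX_{0,t} = X_{0,t}\otimes dB_t \;+\; \frac{1}{2}\, X_{0,t}\otimes \Big(\sum_{j=1}^d e_j \otimes e_j\Big)\, dt.
\end{align*}
Taking expectation of both sides, the It\^o integral term vanishes level by level, leaving a linear ordinary differential equation in $T(\RR^d)$ for $\phi(t) := \EE[X_{0,t}(B)]$:
\begin{align*}
\frac{d\phi(t)}{dt} = \phi(t) \otimes \frac{1}{2}\sum_{j=1}^d e_j \otimes e_j, \qquad \phi(0) = 1.
\end{align*}
Because $a := \frac{1}{2}\sum_{j=1}^d e_j\otimes e_j$ is a fixed element of $T(\RR^d)$ (in fact, of $(\RR^d)^{\otimes 2}$), this ODE is solved explicitly by $\phi(T) = \exp(Ta)$, where $\exp$ is the usual power series in the tensor algebra. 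This yields precisely the stated formula.

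The only delicate point, and therefore the main obstacle, is the justification of interchanging the expectation with the infinite sum over tensor levels and with the It\^o integral. I would handle this inductively, level by level: the projection $\pi_n \phi(t)$ satisfies a finite-dimensional ODE driven by lower levels, and at each level the It\^o integrand has finite second moments via standard $L^2$ estimates for iterated integrals of Brownian motion. This removes the technical issue and leaves the algebraic computation above, which gives the exponential form on every level.
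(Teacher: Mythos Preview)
Your argument is correct and is essentially the standard derivation of Fawcett's formula: pass from the Stratonovich SDE for the signature to the It\^o SDE, take expectations level by level so that the local martingale terms are genuine $L^2$ martingales with zero mean, and solve the resulting linear ODE in $T(\RR^d)$. The only point worth tightening is the Stratonovich-to-It\^o step: you should make explicit that the martingale part of $dX_{0,t}$ is $\sum_j X_{0,t}\otimes e_j\, dB^j_t$, so that the joint quadratic variation with $B$ yields exactly $X_{0,t}\otimes\big(\sum_j e_j\otimes e_j\big)\,dt$; this is what you are using, but it deserves one line.

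As for comparison with the paper: the paper does not prove this proposition at all. It is stated as a known result and attributed to Fawcett (\cite{Fawcett}), with an independent derivation credited to Lyons and Victoir (\cite{Lyons and Victoir}). So there is no ``paper's own proof'' to compare against; your proposal supplies a complete self-contained argument where the paper simply cites the literature.
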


\bigskip

It is immediate from the proposition that if $w \in \sS_{2n}$ for some $n$, then
\begin{align*}
C^{w}(\phi(T)) = \frac{1}{n!} \bigg( \frac{T}{2}  \bigg)^{n}, 
\end{align*}
and $C^{w}(\phi(T)) = 0$ for all other $w$'s.

\bigskip

\begin{lem} \label{expected signature for piecewise linear approximations}
Fix an arbitrary $n \in \NN$. If $w \in \kK_{2n}$ such that $N_{k}(w) = 2 i_{k}$ for $k = 1, \cdots, d$, then for each $t \geq 0$, we have
\begin{align*}
C^{w}(\phi^{1}(t)) = \frac{\lambda_{w}}{n!} \bigg( \frac{t}{2} \bigg)^{n}, 
\end{align*}
where $\lambda_{w} = \begin{pmatrix} n\\ i_{1}, \cdots ,i_{d} \end{pmatrix} \big/ \begin{pmatrix} 2n \\ 2i_{1}, \cdots, 2i_{d} \end{pmatrix} \leq 1$. On the other hand, $C^{w}(\phi^{1}(t)) = 0$ for all $w \in \kK_{2n-1}$ and all $t \geq 0$. 
\end{lem}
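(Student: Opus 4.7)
The key observation is that $B^{(1)}$ on $[0,t]$ is a single straight line segment from the origin to $B_t$, since mesh size $t/1 = t$ means there are no interior interpolation nodes. For a linear path $\gamma_s = (s/t) v$ on $[0,t]$, a direct calculation shows
\begin{align*}
\int_{0<u_1<\cdots<u_N<t} d\gamma_{u_1} \otimes \cdots \otimes d\gamma_{u_N} = \frac{1}{N!} v^{\otimes N}.
\end{align*}
My plan is therefore to begin by specializing this to $v = B_t = \sum_j B_t^j e_j$ and recording
\begin{align*}
X_{0,t}(B^{(1)}) = \sum_{N \geq 0} \frac{1}{N!} B_t^{\otimes N}.
\end{align*}
Expanding $B_t^{\otimes N}$ in the word basis, the coefficient of $w = e_{j_1} \cdots e_{j_N}$ in $X_{0,t}(B^{(1)})$ is simply $\frac{1}{N!} B_t^{j_1} \cdots B_t^{j_N}$.

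Next I would take expectations and exploit independence of the coordinate Brownian motions. The random variables $B_t^1, \ldots, B_t^d$ are independent centred Gaussians with variance $t$, so
\begin{align*}
\EE\bigl[B_t^{j_1} \cdots B_t^{j_N}\bigr] = \prod_{k=1}^{d} \EE\bigl[(B_t^k)^{N_k(w)}\bigr].
\end{align*}
If any $N_k(w)$ is odd then the whole expectation vanishes; this handles the $\kK_{2n-1}$ case and more generally any word that is not in $\kK_{2|w|/2}$. For $w \in \kK_{2n}$ with $N_k(w) = 2i_k$, I invoke the standard Gaussian moment formula $\EE[(B_t^k)^{2i_k}] = \tfrac{(2i_k)!}{2^{i_k} i_k!}\, t^{i_k}$ and multiply across $k$, using that $\sum_k i_k = n$.

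The remainder is bookkeeping: the coefficient becomes
\begin{align*}
C^w(\phi^1(t)) = \frac{1}{(2n)!}\cdot \frac{t^n}{2^n}\prod_{k=1}^{d}\frac{(2i_k)!}{i_k!},
\end{align*}
and I rewrite this as $\tfrac{\lambda_w}{n!}(t/2)^n$ by inserting $n!/n!$ and recognizing the two multinomial coefficients
\begin{align*}
\binom{n}{i_1,\ldots,i_d} = \frac{n!}{\prod_k i_k!}, \qquad \binom{2n}{2i_1,\ldots,2i_d} = \frac{(2n)!}{\prod_k (2i_k)!},
\end{align*}
so that $\lambda_w$ is exactly the stated ratio. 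The inequality $\lambda_w \leq 1$ follows from the elementary combinatorial fact that $\binom{2n}{2i_1,\ldots,2i_d} \geq \binom{n}{i_1,\ldots,i_d}$, which one can either derive by comparing each factor $(2i_k)!/i_k! \geq (2i_k)!!$ and noting $(2n)!/n! \leq \prod_k (2i_k)!/i_k!\cdot$ (appropriate factor), or more transparently by observing that placing $2n$ labelled balls into groups of sizes $2i_k$ can be obtained by first grouping $n$ pairs and then choosing positions of pair-mates, which gives at least as many configurations. I expect this to be essentially routine; the only point that requires a little care is making the combinatorial inequality rigorous, but it does not involve any analytic difficulty.
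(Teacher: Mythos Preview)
Your proposal is correct and follows essentially the same approach as the paper: compute the signature of a straight-line path, take expectations using independence of the coordinate Brownian motions, and invoke Gaussian moments. You give more detail than the paper does on the algebraic rearrangement yielding $\lambda_w$ and on the inequality $\lambda_w \leq 1$; the paper simply asserts the latter, and your injection argument (pair up $\{1,\dots,n\}$ into $\{1,\dots,2n\}$ via $j\mapsto\{2j-1,2j\}$) is the clean way to make it rigorous, so you should just state that directly rather than leaving it as ``essentially routine.''
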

\begin{proof}
If $\gamma = (\gamma^{1}, \cdots, \gamma^{d})$ is a straightline, and $w = e_{j_{1}} \cdots e_{j_{k}}$, then
\begin{align*}
C^{w}(X_{0,t}(\gamma)) = \frac{1}{k!} \gamma^{j_{1}}(t) \cdots \gamma^{j_{k}}(t). 
\end{align*}
Taking expectation of both sides gives
\begin{align*}
C^{w}(\phi^{1}(t)) = \frac{1}{k!} (\EE (B^{1}_{t})^{i_{1}}) \cdots (\EE (B^{d}_{t})^{i_{d}}), 
\end{align*}
where $i_{l}$ is the number of occurences of the letter $e_{l}$ in $w$. It is then clear that $C^{w}(\phi^{1}(t)) = 0$ if any of the $i_{l}$'s is odd. For $w \in \kK_{2n}$, let $2i_{k}$ be the number of occurences of $e_{k}$, then 
\begin{align*}
C^{w}(\phi^{1}(t)) = \frac{1}{(2n)!} (\EE (B^{1}_{t})^{2i_{1}}) \cdots (\EE (B^{d}_{t})^{2i_{d}}), 
\end{align*}
and the conclusion of the lemma follows from the Gaussian moments. 

\end{proof}

\bigskip

\begin{cor} \label{coefficients for square words}
For any $w \in \sS_{2n}$, we have
\begin{align*}
C^{w}(\phi^{M}(T)) \leq C^{w}(\phi(T)). 
\end{align*}
\end{cor}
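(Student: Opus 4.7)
The plan is to combine Chen's identity for path signatures with the independence of Brownian increments on disjoint intervals. Writing $\gamma_{k}$ for the $k$-th linear piece of $B^{(M)}$ on $[(k-1)\Delta t, k\Delta t]$, Chen's identity together with the independence of the increments $B_{k\Delta t}-B_{(k-1)\Delta t}$ yields $\phi^{M}(T) = [\phi^{1}(\Delta t)]^{M}$ in the tensor algebra. Proposition \ref{expected signature for Brownian motion} likewise gives $\phi(T) = [\phi(\Delta t)]^{M}$, since $\exp(cA)^{M} = \exp(McA)$ for $A = \sum_{j} e_{j}\otimes e_{j}$. Extracting the coefficient of $w$ from each product expresses $C^{w}(\phi^{M}(T))$ and $C^{w}(\phi(T))$ as sums over ordered decompositions $w = w_{1} \cdots w_{M}$ (with possibly empty $w_{k}$) of $\prod_{k} C^{w_{k}}$ applied to the respective single-interval factor.

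The key step is a parity argument that identifies which decompositions actually contribute. By Lemma \ref{expected signature for piecewise linear approximations}, $C^{w_{k}}(\phi^{1}(\Delta t))$ vanishes unless every letter appears an even number of times in $w_{k}$, so the parity vector of cumulative letter counts along $w$ must agree at consecutive cut points. Reading $w = e_{i_{1}}^{2}\cdots e_{i_{n}}^{2}$ letter by letter, this parity vector is the zero vector at every even position and a single standard basis vector at every odd position; hence all admissible cut points lie at even positions, and each $w_{k}$ is a concatenation of complete pairs, i.e.\ $w_{k} \in \sS_{2m_{k}}$ with $m_{1}+\cdots+m_{M} = n$. The same decompositions (and only these) contribute to $C^{w}(\phi(T))$, because $C^{w_{k}}(\phi(\Delta t))$ is supported on $\sS$.

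The comparison is then termwise: for $w_{k} \in \sS_{2m_{k}}$, Lemma \ref{expected signature for piecewise linear approximations} gives $C^{w_{k}}(\phi^{1}(\Delta t)) = \lambda_{w_{k}}\cdot C^{w_{k}}(\phi(\Delta t))$ with $\lambda_{w_{k}} \leq 1$, and both sides are nonnegative. Multiplying over $k$ and summing over the common set of admissible decompositions yields $C^{w}(\phi^{M}(T)) \leq C^{w}(\phi(T))$. The only substantive step is the parity observation; once the admissible cuts are forced to lie between pairs $e_{i_{j}}^{2}$, the inequality follows immediately from $\lambda_{w_{k}} \leq 1$.
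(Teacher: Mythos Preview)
Your proof is correct and follows essentially the same route as the paper: both expand $\phi^{M}(T) = \phi^{1}(\Delta t)^{\otimes M}$, restrict the sum over decompositions $w = v_{1}*\cdots*v_{M}$ to those with each $v_{j}\in\sS_{2m_{j}}$, and invoke $\lambda_{v_{j}}\leq 1$ from Lemma~\ref{expected signature for piecewise linear approximations}. The only cosmetic differences are that you make the parity reason for this restriction explicit and compare termwise against the parallel expansion of $\phi(T) = \phi(\Delta t)^{\otimes M}$, whereas the paper sums the bound directly via the multinomial identity to reach $\tfrac{1}{n!}(T/2)^{n}$.
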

\begin{proof}
It suffices to show that $C^{w}(\phi^{M}(T)) \leq \frac{1}{n!} \bigg( \frac{T}{2} \bigg)^{n}$. In fact, by independent increments of Brownian motion, we have $\phi^{M}(T) = \phi^{1}(\Delta t)^{\otimes M}$, which implies
\begin{align*}
C^{w}(\phi^{M}(T)) = \sum C^{v_{1}}(\phi^{1}(\Delta t)) \cdots C^{v_{M}}(\phi^{1}(\Delta t)), 
\end{align*}
where $\Delta t = \frac{T}{M}$, and the sum is taken over all $v_{1} * \cdots *v_{M}$ such that each $v_{j}$ is in $\sS_{2k}$ for some $k$. By Lemma \ref{expected signature for piecewise linear approximations}, we have
\begin{align*}
C^{w}(\phi^{M}(T)) &\leq \bigg( \frac{\Delta t}{2} \bigg)^{n} \sum_{k_{1} + \cdots + k_{M} = n} \begin{pmatrix} n \\ k_{1}, \cdots, k_{M} \end{pmatrix} \\
&= \frac{1}{n!} \bigg( \frac{T}{2} \bigg)^{n}, 
\end{align*}
where we have used the fact that $\lambda_{v_{j}} \leq 1$, and each $v_{j}$ has even length. 
\end{proof}

\bigskip

\begin{lem} \label{full ranks in norm}
For each $n, M \in \NN$ and $T \geq 0$, we have
\begin{align*}
\left\| \pi_{2n}(\phi(T)) \right\| = \left\| \pi_{2n}(\phi^{M}(T))  \right\| = \frac{1}{n!} \cdot \bigg( \frac{dT}{2} \bigg)^{n}. 
\end{align*}
\end{lem}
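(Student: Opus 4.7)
The plan is to exploit the coefficient formula for the projective tensor norm recorded after the definition in Section~2: with the $l_{1}$ norm on $\RR^{d}$, any $x = \sum_{|w|=2n} C^{w} w \in (\RR^{d})^{\otimes 2n}$ satisfies $\|x\| = \sum_{|w|=2n} |C^{w}|$. Thus computing the two norms reduces to summing coefficients, and the only real work is to control signs.

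For $\phi(T)$, I would apply Proposition~\ref{expected signature for Brownian motion} directly to obtain
\[
\pi_{2n}(\phi(T)) = \frac{1}{n!}\left(\frac{T}{2}\right)^{n}\Big(\sum_{j=1}^{d} e_{j}\otimes e_{j}\Big)^{n}.
\]
Expanding the tensor power produces exactly $d^{n}$ distinct basis words in $\sS_{2n}$, each with positive coefficient $\frac{1}{n!}(T/2)^{n}$ and no other terms. Summing absolute values gives the desired $\frac{1}{n!}(dT/2)^{n}$.

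For $\phi^{M}(T)$, I would first verify that every coefficient is non-negative. By Lemma~\ref{expected signature for piecewise linear approximations}, the coefficients of $\phi^{1}(\Delta t)$ are products of even Gaussian moments and hence non-negative. By independence of Brownian increments, $\phi^{M}(T) = \phi^{1}(\Delta t)^{\otimes M}$ in the tensor algebra (as already used in Corollary~\ref{coefficients for square words}), and each coefficient of $\phi^{M}(T)$ is a finite sum of products of the non-negative coefficients of the factors. Hence $\|\pi_{2n}(\phi^{M}(T))\| = \sum_{|w|=2n} C^{w}(\phi^{M}(T))$.

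To evaluate this sum I would use a contraction trick. Let $\beta^{(M)}_{t} = \sum_{j=1}^{d} B^{(M),j}_{t}$. By multilinearity of iterated integrals,
\[
\sum_{|w|=2n} C^{w}(X_{0,T}(B^{(M)})) = \int_{0 < u_{1} < \cdots < u_{2n} < T} d\beta^{(M)}_{u_{1}}\cdots d\beta^{(M)}_{u_{2n}} = \frac{1}{(2n)!}\beta_{T}^{2n},
\]
where the last equality uses that $\beta^{(M)}$ is piecewise linear and $\beta^{(M)}_{T} = \beta_{T}$ for $\beta_{t} = \sum_{j} B^{j}_{t}$. Taking expectations and invoking $\EE \beta_{T}^{2n} = \frac{(2n)!}{2^{n} n!}(dT)^{n}$ for $\beta_{T} \sim N(0, dT)$ yields $\frac{1}{n!}(dT/2)^{n}$. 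The only non-routine step is the positivity of the coefficients of $\phi^{M}(T)$; the rest is algebraic manipulation combined with the Gaussian moment formula.
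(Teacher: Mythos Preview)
Your proof is correct. The paper takes a different route for $\phi^{M}(T)$: it first records the one-step identity
\[
\left\| \pi_{2n}(\phi^{1}(t)) \right\| = \frac{1}{n!}\left(\frac{dt}{2}\right)^{n},
\]
which follows from Lemma~\ref{expected signature for piecewise linear approximations} by summing $\lambda_{w}$ over $\kK_{2n}$, and then uses the factorisation $\phi^{M}(T) = \phi^{1}(\Delta t)^{\otimes M}$ together with positivity to write $\|\pi_{2n}(\phi^{M}(T))\|$ as $\sum_{k_{1}+\cdots+k_{M}=n}\prod_{j}\|\pi_{2k_{j}}(\phi^{1}(\Delta t))\|$, concluding via the multinomial identity. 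Your contraction trick is more direct: collapsing the $d$ coordinates turns the sum of all level-$2n$ coefficients into a single $2n$-fold iterated integral of the one-dimensional path $\beta^{(M)}$, which equals $(\beta^{(M)}_{T})^{2n}/(2n)!$, and the Gaussian moment formula finishes the job after expectation. The paper's approach has the small advantage of isolating the one-step norm as a reusable intermediate quantity; yours bypasses that computation entirely and uses Lemma~\ref{expected signature for piecewise linear approximations} only for nonnegativity.
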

\begin{proof}
That $\left\| \pi_{2n}(\phi(T)) \right\| = \frac{1}{n!} \cdot \bigg( \frac{dT}{2} \bigg)^{n}$ is immediate from Proposition \ref{expected signature for Brownian motion}. In order the prove the second one, we note that
\begin{align} \label{norm of each piece}
\left\| \pi_{2n} (\phi^{1}(t))  \right\| = \frac{1}{n!} \bigg(\frac{dt}{2} \bigg)^{n}
\end{align}
for all $n$ and $t$. By independent increments of Brownian motion, we have
\begin{align*}
\pi_{2n}(\phi^{M}(T))  = \sum_{k_{1} + \cdots + k_{M} = n} \pi_{2k_{1}}(\phi^{1}(\Delta t)) \otimes  \cdots \otimes \pi_{2k_{M}}(\phi^{1}(\Delta t)). 
\end{align*}
By properties of the projective norm and the positivity of all entries, we can change the sum with the norm $\left\|  \cdot  \right\|$, and get
\begin{align*}
\left\| \pi_{2n}(\phi^{M}(T))  \right\| = \sum_{k_{1} + \cdots + k_{M} = n} \left\| \pi_{2k_{1}}(\phi^{1}(\Delta t)) \right\| \cdots \left\| \pi_{2k_{M}}(\phi^{1}(\Delta t)) \right\|. 
\end{align*}
By \eqref{norm of each piece} and the multinomial theorem, we get
\begin{align*}
\left\| \pi_{2n}(\phi^{M}(T)) \right\| = \frac{1}{n!} \bigg( \frac{dT}{2} \bigg)^{n}, 
\end{align*}
thus proving the lemma. 
\end{proof}

\bigskip

Note that the above lemma is true only for projective norm. For Hilbert Schmidt norm, we have $\left\| \pi_{2n}(\phi(T)) \right\| > \left\| \pi_{2n}(\phi^{M}(T))  \right\|$. The next proposition will be very useful for proving the main theorem. It is an immediate consequence of the previous lemma.

\bigskip

\begin{prop} \label{expression from the symmetry}
$\left\| \pi_{2n}(\phi(T)) - \pi_{2n}(\phi^{M}(T)) \right\| = 2 \sum_{w \in \kK_{2n} \setminus \sS_{2n}} C^{w}(\phi^{M}(T))$. 
\end{prop}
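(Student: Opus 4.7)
The plan is to unfold both sides using the explicit description of the projective norm on $(\RR^d)^{\otimes 2n}$, namely that for $x = \sum_{|w|=2n} C^w(x)\, w$ one has $\|x\| = \sum_{|w|=2n} |C^w(x)|$. This reduces the statement to an identity between sums of coefficients.

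First I would write down the support of each projection. By Proposition \ref{expected signature for Brownian motion} the only nonzero coefficients of $\pi_{2n}(\phi(T))$ sit on $w \in \sS_{2n}$, while by Lemma \ref{expected signature for piecewise linear approximations} (combined with $\phi^M(T) = \phi^1(\Delta t)^{\otimes M}$ as in the proof of Corollary \ref{coefficients for square words}) the only nonzero coefficients of $\pi_{2n}(\phi^M(T))$ sit on $w \in \kK_{2n}$, and all such coefficients are nonnegative. Since $\sS_{2n} \subseteq \kK_{2n}$, I can split
\begin{align*}
\pi_{2n}(\phi(T)) - \pi_{2n}(\phi^M(T)) = \sum_{w \in \sS_{2n}} \bigl[C^w(\phi(T)) - C^w(\phi^M(T))\bigr] w \;-\; \sum_{w \in \kK_{2n}\setminus\sS_{2n}} C^w(\phi^M(T))\, w.
\end{align*}
By Corollary \ref{coefficients for square words} the bracketed quantities in the first sum are nonnegative, and the coefficients in the second sum are nonnegative as well. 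Hence the absolute values in the $\ell_1$ expression of the projective norm drop out, giving
\begin{align*}
\|\pi_{2n}(\phi(T)) - \pi_{2n}(\phi^M(T))\| = \sum_{w \in \sS_{2n}}\bigl[C^w(\phi(T)) - C^w(\phi^M(T))\bigr] \;+\; \sum_{w \in \kK_{2n}\setminus\sS_{2n}} C^w(\phi^M(T)).
\end{align*}

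To finish, I would invoke Lemma \ref{full ranks in norm}, which says that both $\|\pi_{2n}(\phi(T))\|$ and $\|\pi_{2n}(\phi^M(T))\|$ equal $\frac{1}{n!}(\frac{dT}{2})^n$. Since all coefficients involved are nonnegative, these norms are just the sums of the coefficients over $\sS_{2n}$ and over $\kK_{2n}$ respectively, so
\begin{align*}
\sum_{w \in \sS_{2n}} C^w(\phi(T)) \;=\; \sum_{w \in \sS_{2n}} C^w(\phi^M(T)) + \sum_{w \in \kK_{2n}\setminus\sS_{2n}} C^w(\phi^M(T)).
\end{align*}
Rearranging shows that the first sum in the previous display equals the second, which doubles the right-hand side and yields the claimed formula.

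There is no real obstacle here: the proposition is essentially a bookkeeping consequence of three facts already in hand, namely the support descriptions of $\phi(T)$ and $\phi^M(T)$, the sign information from Corollary \ref{coefficients for square words}, and the norm equality from Lemma \ref{full ranks in norm}. The only subtle point worth checking carefully is that the projective norm really does collapse to the $\ell_1$ sum of coefficients once signs are controlled, which is exactly what the paper records just after the definition of the projective tensor norm for the $l_1$ norm on $\RR^d$.
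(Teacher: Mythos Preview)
Your proof is correct and follows essentially the same route as the paper: split the norm over $\sS_{2n}$ and $\kK_{2n}\setminus\sS_{2n}$ using Corollary \ref{coefficients for square words} to control signs, then invoke Lemma \ref{full ranks in norm} to see that the two resulting sums coincide. You are simply more explicit about the support and nonnegativity of the coefficients of $\phi^M(T)$, which the paper leaves implicit.
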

\begin{proof}
By Corollary \ref{coefficients for square words}, we have
\begin{align*}
\left\| \pi_{2n}(\phi(T)) - \pi_{2n}(\phi^{M}(T)) \right\| = \sum_{w \in \kK_{2n} \setminus \sS_{2n}} C^{w}(\phi^{M}(T)) + \sum_{w \in \sS_{2n}}[C^{w}(\phi(T)) - C^{w}(\phi^{M}(T))]. 
\end{align*}
Also, Lemma \ref{full ranks in norm} implies that the two terms on the right hand side are equal. Thus, we arrive at the conclusion of the proposition. 
\end{proof}

\bigskip

\section{Proof of Theorem \ref{main theorem}}

This section is devoted to the proof of Theorem \ref{main theorem}. The first part of the theorem is an immediate consequence of Proposition \ref{expected signature for Brownian motion} and Lemma \ref{expected signature for piecewise linear approximations}. To prove the second part, we need a more detailed study of the coefficients of words in $\kK_{2n}$. By Proposition \ref{expression from the symmetry}, it suffices to consider the words in $\kK_{2n} \setminus \sS_{2n}$. Let
\begin{align*}
\eE = \{e_{i}e_{j}e_{i}e_{j}, e_{i}e_{j}e_{j}e_{i}: 1 \leq i, j \leq d, i \neq j\}. 
\end{align*}
For each $k = 0, 1, \cdots, n-2$, define
\begin{align*}
\wW_{2n}^{k} = \{ v * v' * v'': v \in \sS_{2k}, v' \in \eE ,v'' \in \sS_{2n-4-2k} \}, 
\end{align*}
and let 
\begin{align*}
\wW_{2n} := \bigcup_{k=1}^{n-2} \wW_{2n}^{k}. 
\end{align*}
Then $\wW_{2n} \subset \kK_{2n} \setminus \sS_{2n} $. We will show that for each $n$, the set of words whose coeffieicents are of order $\frac{1}{M}$ is precisely $\wW_{2n} \cup \sS_{2n}$. We then compute the sum of coefficients (with absolute values) in $\wW_{2n}$, and those in $\sS_{2n}$ will be obtained by symmetry. We now study the coefficients of words in $\kK_{2n} \setminus (\sS_{2n} \cup \wW_{2n})$ and in $\wW_{2n}$, respectively.

\bigskip

\subsection{Words with negligible coefficients}

The purpose of this part is to show that for each $n$, there exists a constant $C = C(d,n)$ such that
\begin{align} \label{negligible part}
\sum_{w \in \kK_{2n} \setminus (\sS_{2n} \cup \wW_{2n})} C^{w}(\phi^{M}(T)) < \frac{C T^{n}}{M^{2}}
\end{align}
for all large $M$. For $w \in \kK_{2n}$ with $w = e_{i_{1}}e_{i_{2}} \cdots e_{i_{2n-1}}e_{i_{2n}}$, let
\begin{align*}
p(w) = |\{k: i_{2k-1} \neq i_{2k}\}|. 
\end{align*}
In other words, $p(w)$ counts the number of non-square pairs in the word $w$. For each $k = 0, \cdots, n$, define
\begin{align*}
\pP_{2n}^{k} = \{w \in \kK_{2n}: p(w) = k\}. 
\end{align*}
It is clear that $\pP_{2n}^{0} = \sS_{2n}$, $\pP_{2n}^{1}$ is empty, $\wW_{2n} \subset \pP_{2n}^{2}$, and
\begin{align*}
\kK_{2n} = \bigcup_{k=0}^{n} \pP_{2n}^{k}
\end{align*}
as a disjoint union. We will now show that for any $w \in \pP_{2n}^{k}$, we have
\begin{align} \label{bounds for words with different number of non-square pairs}
C^{w}(\phi^{M}(T)) < \frac{CT^{n}}{M^{\lfl (k+1)/2  \rfl}}. 
\end{align}
We first consider the case $k=2$. If $w \in \pP_{2n}^{2}$, then it can be expressed as
\begin{align*}
w = \cdots e_{i}e_{j} \cdots e_{i}e_{j} \cdots, \qquad \text{or} \qquad w = \cdots e_{i}e_{j} \cdots e_{j}e_{i} \cdots, 
\end{align*}
where $i \neq j$, and all other pairs are squares. Without loss of generality, we can assume $w$ has the form
\begin{align*}
w = e_{i_{1}}^{2} \cdots e_{i_{a}}^{2} \underbrace{e_{i}e_{j} * u' * e_{i}e_{j}}_{u} e_{j_{1}}^{2} \cdots e_{j_{b}}^{2}, 
\end{align*}
where $u' \in \sS_{2r}, r \geq 0$, and $a + b + r = n-2$. Let $u = e_{i}e_{j} * u' * e_{i}e_{j}$. Since $\phi^{M}(T) = \phi^{1}(\Delta t) ^{\otimes M}$, we have
\begin{align} \label{expand the coefficient into decompositions}
C^{w}(\phi^{M}(T)) = \sum C^{v_{1}}(\phi^{1}(\Delta t)) \cdots C^{v_{M}}(\phi^{1}(\Delta t)), 
\end{align}
where the sum is taken over the collection of words $(v_{1}, \cdots, v_{M})$ such that (i) $v_{1} * \cdots * v_{M} = w$, and (ii) for each $j$, either $v_{j} \in \sS_{2l}$ for some $l \geq 0$, or $v_{j} = v' * u * v''$, where $v' \in \sS_{2a'}, v'' \in \sS_{2b'}$ for some $a',b' \geq 0$\protect\footnote{Condition (ii) guarantees that every term in the sum is positive. In fact, by Lemma \ref{expected signature for piecewise linear approximations}, if $(v_{1}, \cdots, v_{M})$ satisfies condition (i) but not (ii), then we will have
\begin{align*}
C^{v_{1}}(\phi^{1}(\Delta t)) \cdots C^{v_{M}}(\phi^{1}(\Delta t)) = 0. 
\end{align*}
}. The idea is that the two non-square terms must be grouped together (along with any squares between these two pairs, if they exist) in order for the product on the right hand side of \eqref{expand the coefficient into decompositions} not being zero. This will give at most $n-1$ 'atoms' in the decomposition, and the total number of the elements in the sum will be $\oO(M^{n-1})$. 

Formally, by Lemma \ref{expected signature for piecewise linear approximations}, for each decopomsition $(v_{1}, \cdots, v_{M})$ in the sum, we have
\begin{align} \label{bound for a single decomposition}
C^{v_{1}}(\phi^{1}(\Delta t)) \cdots C^{v_{M}}(\phi^{1}(\Delta t)) < \bigg( \frac{\Delta t}{2}  \bigg)^{a+b+k+2} = \bigg( \frac{\Delta t}{2} \bigg)^{n}, 
\end{align}
and we can bound $C^{w}(\phi^{M}(T))$ by counting the number of elements in the sum on the right hand side of \eqref{expand the coefficient into decompositions}. This is exactly the number of nonnegative integer solutions to
\begin{align*}
x_{1} + \cdots + x_{M} = a + b + 1, 
\end{align*}
which equals
\begin{align*}
\begin{pmatrix} M+a+b \\ M-1 \end{pmatrix} = \begin{pmatrix} M+n-2-r \\ n-1-r \end{pmatrix} < (M+n)^{n-1-r}. 
\end{align*}
Combining the above bound with \eqref{bound for a single decomposition}, we have
\begin{align*}
C^{w}(\phi^{M}(T)) < [(M+n)\Delta t]^{n-1-r} (\Delta t)^{r+1} < \bigg (\frac{T + n \Delta t}{2} \bigg)^{n} \cdot \frac{1}{M^{r+1}}, 
\end{align*}
and this is true for all $w \in \pP_{2n}^{2}$. Now, if $w \in \pP_{2n}^{2} \setminus \wW_{2n}$, then $r \geq 1$, and
\begin{align*}
C^{w}(\phi^{M}(T)) < \frac{CT^{n}}{M^{2}}. 
\end{align*}
The argument for $k \geq 3$ is similar. In order to produce more 'atoms', the best possible choice is to group the consecutive two non-square pairs together, and in the case of odd $k$, one atom should contain three non-square pairs\protect\footnote{For example, the three pairs are $e_{1}e_{2}$, $e_{2}e_{3}$ and $e_{3}e_{1}$. }. Below are two figures for even and odd $k$'s, respectively. 
\begin{align*}
&k \phantom{1} \text{even}: \qquad \cdots \underbrace{e_{i_{1}}e_{i_{2}} \cdots e_{i_{3}}e_{i_{4}}}_{u_{1}} \cdots \cdots  \underbrace{e_{i_{k-3}} e_{i_{k-2}} \cdots e_{i_{k-1}}e_{i_{k}}}_{u_{\frac{k}{2}}} \cdots \\
&k \phantom{1} \text{odd}: \qquad \cdots \underbrace{e_{i_{1}}e_{i_{2}} \cdots e_{i_{3}}e_{i_{4}} \cdots e_{i_{5}}e_{i_{6}}}_{u_{1}} \cdots \cdots \underbrace{e_{i_{k-3}} e_{i_{k-2}} \cdots e_{i_{k-1}}e_{i_{k}}}_{u_{\frac{k-1}{2}}} \cdots 
\end{align*}
As we can see, this will give at most $n - \lfl \frac{k+1}{2} \rfl$ 'atoms' in the decompositions. Thus, by the same computation of the number of elements for such decompositions, we can show that
\begin{align*}
C^{w}(\phi^{M}(T)) < \frac{C T^{n}}{M^{\lfl (k+1)/2 \rfl}}
\end{align*}
for all $w \in \pP_{2n}^{k}$ with $k \geq 3$, where $C$ depends on $n$ only. Since
\begin{align*}
\kK_{2n} \setminus (\sS_{2n} \cup \wW_{2n})^{c} = (\pP_{2n}^{2} \setminus \wW_{2n}) \cup \pP_{2n}^{3} \cup \cdots \cup \pP_{2n}^{n}, 
\end{align*}
and note that the number of elements in $\kK_{2n} \setminus (\sS_{2n} \cup \wW_{2n})^{c}$ depends on $d$ and $n$ only, we conclude \eqref{negligible part} with a constant $C = C(d,n)$.

\bigskip

\subsection{Words in $\wW_{2n}$}

Fix $0 \leq k \leq n-2$ and $w_{k} \in \wW_{2n}^{k}$, then
\begin{align*}
w_{k} = e_{i_{1}}^{2} \cdots e_{i_{k}}^{2} * u * e_{j_{1}}^{2} \cdots e_{j_{n-2-k}}^{2}
\end{align*}
where $u \in \eE$ as defined at the beginning of this section. Similar as before, we have
\begin{align*}
C^{w_{k}}(\phi^{M}(T)) = \sum_{\xX(w_{k})}C^{v_{k}^{1}}(\phi^{1}(\Delta t)) \cdots C^{v_{k}^{M}}(\phi^{1}(\Delta t)), 
\end{align*}
where $\xX(w_{k})$ is the set of words $(v_{k}^{1}, \cdots, v_{k}^{M})$ such that (i) $v_{k}^{1} * \cdots * v_{k}^{M} = w$, and (ii) for each $j$, either $v_{j} \in \sS_{2l}$ for some $l \geq 0$, or $v_{j} = u' * u * u''$, where $u' \in \sS_{2a}, u'' \in \sS_{2b}$ for some $a,b \geq 0$. 

Intuitively, when $M$ is large, most contributions of the sum come from the decompositions with the further restriction that $u$ and each single square are located in different $v_{j}$'s. More precisely, let
\begin{align*}
\xX'(w_{k}) := \big\{ v_{k}^{1} * \cdots *v_{k}^{M} = w: \phantom{1} \text{for each} \phantom{1} j \leq M, \phantom{1} v_{k}^{j} = u \phantom{1} \text{or} \phantom{1} e_{l}^{2} \phantom{1} \text{for some} \phantom{1} l \big\}. 
\end{align*}
Then, $\xX'(w_{k}) \subset \xX(w_{k})$, and 
\begin{align*}
|\xX'(w_{k})| = \begin{pmatrix}   M \\ n-1  \end{pmatrix}. 
\end{align*}
Their difference is
\begin{align*}
|\xX(w_{k}) \setminus \xX'(w_{k})| = \begin{pmatrix} M+n-2  \\ n-1   \end{pmatrix} - \begin{pmatrix}  M \\ n-1  \end{pmatrix} = \oO(M^{n-2}). 
\end{align*}
Also, for each $(v_{k}^{1}, \cdots, v_{k}^{M}) \in \xX(w_{k}) \setminus \xX'(w_{k})$, we have
\begin{align} \label{second negligible set}
C^{v_{k}^{1}}(\phi^{1}(\Delta t)) \cdots C^{v_{k}^{M}}(\phi^{1}(\Delta t)) \leq \bigg( \frac{\Delta t}{2} \bigg)^{n}, 
\end{align}
and thus
\begin{align*}
\sum_{\xX(w_{k}) \setminus \xX'(w_{k})} C^{v_{k}^{1}}(\phi^{1}(\Delta t)) \cdots C^{v_{k}^{M}}(\phi^{1}(\Delta t)) = \oO \bigg( \frac{1}{M^{2}} \bigg). 
\end{align*}
On the other hand, for every $(v_{k}^{1}, \cdots, v_{k}^{M}) \in \xX'(w_{k})$, Lemma \ref{expected signature for piecewise linear approximations} implies that
\begin{align*}
C^{v_{k}^{1}}(\phi^{1}(\Delta t)) \cdots C^{v_{k}^{M}}(\phi^{1}(\Delta t)) = \frac{1}{6} \bigg( \frac{\Delta t}{2} \bigg)^{n}. 
\end{align*}
Since $|\xX'(w_{k})| = \begin{pmatrix}   M \\ n-1  \end{pmatrix}$, combining the above equality with \eqref{second negligible set}, we get
\begin{align*}
C^{w_{k}}(\phi^{M}(T)) = \frac{1}{12 \cdot (n-1)!} \bigg( \frac{T}{2} \bigg)^{n-1} \Delta t + \oO \big( \frac{1}{M^{2}} \big), 
\end{align*}
which holds for each $w_{k} \in \wW_{2n}^{k}$. Note that there are $4 d^{n-2} \begin{pmatrix} d \\ 2 \end{pmatrix}$ in $\wW_{2n}^{k}$ for each $k$, summing over $k$ from $0$ to $n-2$, we get
\begin{align} \label{the main contribution}
\sum_{w \in \wW_{2n}} C^{w}(\phi^{M}(T)) = \frac{(d-1)T}{6M \cdot (n-2)!} \bigg( \frac{dT}{2} \bigg)^{n-1} + \oO \big( \frac{1}{M^{2}} \big). 
\end{align}

\bigskip

\subsection{Putting all together}

We are now in a position to prove the main claim. By Proposition \ref{expression from the symmetry}, we have
\begin{align} \label{symmetry}
\left\| \pi_{2n}(\phi^{M}(T)) - \pi_{2n}(\phi(T)) \right\| = 2 \sum_{w \in \kK_{2n} \setminus \sS_{2n}} C^{w}(\phi^{M}(T)
\end{align}
Also by \eqref{negligible part}, we know that the coefficients of the words in $\kK_{2n} \setminus (\sS_{2n} \cup \wW_{2n})$ are of order $O(\frac{1}{M^{2}})$, and thus
\begin{align*}
\sum_{w \in \kK_{2n} \setminus \sS_{2n}} C^{w}(\phi^{M}(T)) = \sum_{w \in \wW_{2n}} C^{w}(\phi^{M}(T)) + \oO \big( \frac{1}{M^{2}} \big). 
\end{align*}
Substituting \eqref{the main contribution} into the right hand side, and combining it with \eqref{symmetry}, we get
\begin{align*}
\left\| \pi_{2n}(\phi^{M}(T)) - \pi_{2n}(\phi(T)) \right\| = \frac{(d-1)T}{3M \cdot (n-2)!} \bigg( \frac{dT}{2} \bigg)^{n-1} + \oO \big(\frac{1}{M^{2}} \big). 
\end{align*}
Multiplying $\frac{M}{T}$ on both sides, and letting $M \rightarrow +\infty$, we get
\begin{align*}
\lim_{M \rightarrow +\infty} \frac{M}{T} \left\| \pi_{2n}(\phi(T)) - \pi_{2n}(\phi^{M}(T)) \right\| = \frac{d-1}{3 \cdot (n-2)!}  \bigg(\frac{dT}{2} \bigg)^{n-1}. 
\end{align*}
Thus we have completed the proof of the main theorem.

\bigskip

\bigskip

\bigskip

\begin{flushleft}
\textsc{Mathematical and Oxford-Man Institutes, University of Oxford, 24-29 St.Giles, Oxford, OX1 3LB, UK.} \\
Email addresses: ni@maths.ox.ac.uk, xu@maths.ox.ac.uk
\end{flushleft}

\end{document}